\newtheorem*{thm*}{Theorem}
\newtheorem{thm}{Theorem}
\newtheorem{lemma}{Lemma}
\begin{document}

\def\d{ \partial_{x_j} } 
\def\Na{{\mathbb{N}}}

\def\Z{{\mathbb{Z}}}

\def\IR{{\mathbb{R}}}

\newcommand{\E}[0]{ \varepsilon}

\newcommand{\la}[0]{ \lambda}

\newcommand{\s}[0]{ \mathcal{S}}

\newcommand{\AO}[1]{\| #1 \| }

\newcommand{\BO}[2]{ \left( #1 , #2 \right) }

\newcommand{\CO}[2]{ \left\langle #1 , #2 \right\rangle} 

\newcommand{\R}[0]{ \IR\cup \{\infty \} } 

\newcommand{\co}[1]{ #1^{\prime}} 

\newcommand{\p}[0]{ p^{\prime}} 

\newcommand{\m}[1]{   \mathcal{ #1 }} 

\newcommand{ \W}[0]{ \mathcal{W}}

\newcommand{ \A}[1]{ \left\| #1 \right\|_H }

\newcommand{\B}[2]{ \left( #1 , #2 \right)_H }

\newcommand{\C}[2]{ \left\langle #1 , #2 \right\rangle_{  H^* , H } }

 \newcommand{\HON}[1]{ \| #1 \|_{ H^1} }

\newcommand{ \Om }{ \Omega}

\newcommand{ \pOm}{\partial \Omega}

\newcommand{\D}{ \mathcal{D} \left( \Omega \right)}

\newcommand{\DP}{ \mathcal{D}^{\prime} \left( \Omega \right)  }

\newcommand{\DPP}[2]{   \left\langle #1 , #2 \right\rangle_{  \mathcal{D}^{\prime}, \mathcal{D} }}

\newcommand{\PHH}[2]{    \left\langle #1 , #2 \right\rangle_{    \left(H^1 \right)^*  ,  H^1   }    }

\newcommand{\PHO}[2]{  \left\langle #1 , #2 \right\rangle_{  H^{-1}  , H_0^1  }} 

 \newcommand{\HO}{ H^1 \left( \Omega \right)}

\newcommand{\HOO}{ H_0^1 \left( \Omega \right) }

\newcommand{\CC}{C_c^\infty\left(\Omega \right) }

\newcommand{\N}[1]{ \left\| #1\right\|_{ H_0^1  }  }

\newcommand{\IN}[2]{ \left(#1,#2\right)_{  H_0^1} }

\newcommand{\INI}[2]{ \left( #1 ,#2 \right)_ { H^1}} 

\newcommand{\HH}{   H^1 \left( \Omega \right)^* } 

\newcommand{\HL}{ H^{-1} \left( \Omega \right) }

\newcommand{\HS}[1]{ \| #1 \|_{H^*}}

\newcommand{\HSI}[2]{ \left( #1 , #2 \right)_{ H^*}}

\newcommand{\WO}{ W_0^{1,p}} 
\newcommand{\w}[1]{ \| #1 \|_{W_0^{1,p}}}  

\newcommand{\ww}{(W_0^{1,p})^*}   

\newcommand{\Ov}{ \overline{\Omega}}

\title{Regularity of stable solutions of a Lane-Emden type system}
\author{Craig Cowan\\
{\it\small Department of Mathematical Sciences}\\
{\it\small University of Alabama in Huntsville  }\\
{\it\small 258A Shelby Center}\\
\it\small Huntsville, AL 35899 \\ 
{\it\small ctcowan@stanford.edu} }

\maketitle


\vspace{3mm}

\begin{abstract}  We examine the system given by 

 \begin{eqnarray*} 
 \left\{ \begin{array}{lcl}
\hfill   -\Delta u    &=& \lambda (v+1)^p \qquad \Omega  \\
\hfill -\Delta v &=& \gamma (u+1)^\theta \qquad \Omega,  \\
\hfill u &=& v =0 \qquad \quad \pOm,
\end{array}\right. 
  \end{eqnarray*}    where $ \lambda,\gamma$ are positive parameters and where $ 1 < p \le \theta$ and where $ \Omega$ is a smooth bounded domain in $ \IR^N$.  We show the extremal solutions associated with the above system are bounded provided  
  \[ \frac{N}{2} < 1 + \frac{2(\theta+1)}{p\theta -1} \left(  \sqrt{  \frac{p \theta (p+1)}{\theta +1}   } + \sqrt{  \frac{p \theta (p+1)}{\theta +1}    - \sqrt{ \frac{p \theta (p+1)}{\theta +1}     }} \right).\]

\end{abstract}

\noindent
{\it \footnotesize 2010 Mathematics Subject Classification}. {\scriptsize }\\
{\it \footnotesize Key words: Extremal solution, Stable solution, Regularity of solutions}. {\scriptsize }

\section{Introduction}

In this work we examine  the following  system: 
 \begin{eqnarray*} 
(N)_{\lambda,\gamma}\qquad  \left\{ \begin{array}{lcl}
\hfill   -\Delta u    &=& \lambda (v+1)^p \qquad \Omega  \\
\hfill -\Delta v &=& \gamma (u+1)^\theta \qquad \Omega,  \\
\hfill u &=& v =0 \qquad \quad \pOm,
\end{array}\right. 
  \end{eqnarray*}    
  where $\Omega$ is a bounded domain in $\IR^N$, $ \lambda, \gamma >0$ are positive parameters and where $ p,\theta >1$.     Our interest is in the regularity of the extremal solutions associated with $ (N)_{\lambda,\gamma}$.   In particular we are interested when the extremal solutions  of $(N)_{\lambda,\gamma}$  are bounded,  since one can then apply elliptic regularity theory to show the extremal solutions are classical solutions.  
 The nonlinearities we examine naturally fit into the following  class: \\

(R): \qquad $f$ is smooth, increasing, convex on  $\IR$ with $ f(0)=1$ and $ f$ is superlinear at $ \infty$ (i.e. $ \displaystyle \lim_{u \rightarrow \infty} \frac{f(u)}{u}=\infty$).

\subsection{Second order scalar case} 
For a nonlinearity $ f$ of type (R) consider the following  second order scalar analog of the above system  given by
\begin{equation*}
(Q)_\lambda \qquad  \left\{ 
\begin{array}{ll}
-\Delta u =\lambda f(u) &\hbox{in }\Omega \\
u =0 &\hbox{on } \pOm.
\end{array}
\right.
\end{equation*} 
This scalar equation is  now quite well understood whenever $ \Omega$ is a bounded smooth domain in $ \IR^N$. See, for instance, \cite{bcmr,BV,Cabre,CC,Martel,MP,Nedev}. We now list the  properties one comes to expect when studying $(Q)_\lambda$.  

\begin{itemize} \item  There exists a finite positive critical parameter $ \lambda^*$, called the \textbf{extremal parameter}, such that for all $ 0< \lambda < \lambda^*$ there exists a a smooth \textbf{minimal solution} $ u_\lambda$ of $ (Q)_\lambda$.   By minimal solution, we mean here that if $ v$ is another solution of $ (Q)_\lambda$ then $v \ge u_\lambda$ a.e. in $ \Omega$.  

\item For each $ 0< \lambda < \lambda^*$ the minimal solution $ u_\lambda$ is \textbf{semi-stable} in the sense that 
\[ \int_\Omega \lambda f'(u_\lambda) \psi^2 dx \le \int_\Omega | \nabla \psi|^2 dx, \qquad \forall \psi \in H_0^1(\Omega),\]  
and is unique among all the weak semi-stable solutions. 
 
\item The map $ \lambda \mapsto u_\lambda(x)$ is increasing on $ (0,\lambda^*)$ for each $ x \in \Omega$.    This allows one to define $ u^*(x):= \lim_{\lambda \nearrow \lambda^*} u_\lambda(x)$, the so-called {\bf extremal solution}, which can be shown to be a weak solution of $ (Q)_{\lambda^*}$.    In addition one can show that $ u^*$ is the unique weak solution of $(Q)_{\lambda^*}$. See \cite{Martel}. 
\item There are no solutions of $ (Q)_{\lambda}$ (even in a very weak sense) for $ \lambda > \lambda^*$.   

\end{itemize} 

A question which has attracted a lot of attention is whether the extremal function $ u^*$ is a classical solution of $(Q)_{\lambda^*}$. This is of interest since one can then apply the results from \cite{CR}  to start a second branch of solutions emanating from $(\lambda^*, u^*)$.   
The answer typically depends on the nonlinearity $f$, the dimension $N$ and the geometry of the domain $ \Omega$.  We now list some known results.
 
 \begin{itemize} \item  \cite{CR} Suppose $ f(u)=e^u$.  If  $ N <10$ then $ u^*$ is bounded.  For $ N \ge 10$ and $ \Omega$ the unit ball $ u^*(x)=-2 \log(|x|)$.

 \item \cite{CC} Suppose $ f$ satisfies (R) but without the convexity assumption and $ \Omega$ is the unit ball.   Then $ u^*$ is bounded for $ N <10$.  In view of the above result this is optimal.

 \item On general domains, and if $f$ satisfies (R), then $ u^*$ is bounded for $ N \le 3$ \cite{Nedev}.   Recently this has been improved to $ N \le 4$ provided the domain is convex (again one can drop the convexity assumption on $f$), see \cite{Cabre}.

  \end{itemize}

We now examine the generalization of $(N)_{\lambda,\gamma}$ given by

 \begin{eqnarray*} 
(P)_{\lambda,\gamma}\qquad  \left\{ \begin{array}{lcl}
\hfill   -\Delta u    &=& \lambda f(v)\qquad \Omega  \\
\hfill -\Delta v &=& \gamma g(u)   \qquad \Omega,  \\
\hfill u &=& v =0 \qquad \pOm,
\end{array}\right.
  \end{eqnarray*}    where $ f$ and $ g$ satisfy (R).      
 Define  $ \mathcal{Q}=\{ (\lambda,\gamma): \lambda, \gamma >0 \}$, 
\[ \mathcal{U}:= \left\{ (\lambda,\gamma) \in \mathcal{Q}: \mbox{ there exists a smooth solution $(u,v)$ of $(P)_{\lambda,\gamma}$} \right\},\]   and  set
 $ \Upsilon:= \partial \mathcal{U} \cap \mathcal{Q}$. Note that $\Upsilon$ is the analog of $ \lambda^*$ for the above system.  
  A generalization of $(P)_{\lambda,\gamma}$ was examined in \cite{Mont} and many results were obtained, including

\begin{thm*} (Montenegro, \cite{Mont}) Suppose $f$ and $g$ satisfy (R).  Then 
\begin{enumerate} \item $\mathcal{U}$ is nonempty.   

\item For all $ (\lambda,\gamma) \in \mathcal{U}$ there exists a smooth, minimal solution of $(P)_{\lambda,\gamma}$. 

\item For each $ 0 < \sigma < \infty$ there is some $ 0 < \lambda^*_\sigma < \infty$ such that   $ \mathcal{U} \cap \{ (\lambda,\sigma \lambda): 0 < \lambda \}$  is given by  $ \{ (\lambda, \sigma \lambda): 0 < \lambda < \lambda_\sigma^* \} \cup \mathcal{H}$
 where $\mathcal{H}$ is either the empty set or $ \{ (\lambda_\sigma^*, \sigma \lambda_\sigma^*) \}$.  The map $\sigma \mapsto \lambda_\sigma^*$ is bounded on compact subsets of $(0,\infty)$.   Fix $ 0 < \sigma < \infty$ and let $ (u_\lambda,v_\lambda)$ denote the smooth minimal solution of $(P)_{\lambda,\sigma \lambda}$ for $ 0 <\lambda < \lambda_\sigma^*$.  Then $ u_\lambda(x),v_\lambda(x)$ are increasing in $ \lambda$ and hence 
 \[ u^*(x):= \lim_{\lambda \nearrow \lambda_\sigma^*} u_\lambda(x), \quad v^*(x):= \lim_{\lambda \nearrow \lambda_\sigma^*} u_\lambda(x),\] are well defined and can be shown to be a weak solution of $(P)_{\lambda_\sigma^*, \sigma \lambda_\sigma^*}$.  

\end{enumerate}  
\end{thm*}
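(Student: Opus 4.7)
For parts (1) and (2), my plan is the sub/supersolution iteration adapted to the cooperative system structure. To see $\mathcal U\ne\emptyset$, I would construct a bounded supersolution for small $\lambda,\gamma>0$ by setting $\overline u=\lambda A w$, $\overline v=\gamma B w$, where $-\Delta w=1$ in $\Omega$, $w=0$ on $\partial\Omega$, and $A,B>0$ are chosen with $A\ge f(\gamma B\|w\|_\infty)$ and $B\ge g(\lambda A\|w\|_\infty)$; this is possible for small $\lambda,\gamma$ by continuity of $f,g$ at $0$. Given any supersolution, the monotone iteration $u_0=v_0=0$, $-\Delta u_{n+1}=\lambda f(v_n)$, $-\Delta v_{n+1}=\gamma g(u_n)$ with zero Dirichlet data generates an increasing sequence (by the maximum principle together with monotonicity of $f,g$) dominated componentwise by the supersolution; by standard elliptic regularity the limit is classical, and since the iteration starts below any solution it yields the minimal solution, proving (2). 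Applying this to any $(\lambda_0,\gamma_0)\in\mathcal U$ and any $(\lambda,\gamma)\le(\lambda_0,\gamma_0)$ in $\mathcal Q$ shows $\mathcal U$ is downward-closed.

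For the ray structure in part (3), downward-closedness along the ray $\{(\lambda,\sigma\lambda)\}$ is immediate, so $\mathcal U\cap\{(\lambda,\sigma\lambda):\lambda>0\}$ has the stated form. To bound $\lambda_\sigma^*$ I would use the first Dirichlet eigenpair $(\lambda_1,\varphi_1)$ of $-\Delta$ on $\Omega$, normalized by $\int\varphi_1\,dx=1$ and $\varphi_1>0$. Testing the two equations against $\varphi_1$ and writing $s=\int u\varphi_1$, $t=\int v\varphi_1$, Jensen's inequality applied to the probability measure $\varphi_1\,dx$ (using convexity of $f,g$) gives
\[
\lambda_1 s\ \ge\ \lambda f(t),\qquad \lambda_1 t\ \ge\ \sigma\lambda g(s).
\]
Multiplying yields $\lambda_1^2 st\ge\sigma\lambda^2 f(t)g(s)$; superlinearity of $f$ and $g$ forces $\lambda$ to be bounded by a quantity depending continuously on $\sigma$, hence locally bounded in $\sigma\in(0,\infty)$.

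Finally, monotonicity of $\lambda\mapsto(u_\lambda,v_\lambda)$ follows since for $0<\mu<\lambda<\lambda_\sigma^*$ the minimal solution at $\lambda$ is a supersolution at $\mu$; running the iteration from $(0,0)$ converges to $(u_\mu,v_\mu)\le(u_\lambda,v_\lambda)$. Thus the pointwise limits $u^*,v^*$ exist as $\lambda\nearrow\lambda_\sigma^*$. To confirm $(u^*,v^*)$ is a weak (distributional) solution of $(P)_{\lambda_\sigma^*,\sigma\lambda_\sigma^*}$, I would pass to the limit in $\int u_\lambda(-\Delta\zeta)\,dx=\lambda\int f(v_\lambda)\zeta\,dx$ (and its analogue for $v_\lambda$) for $\zeta\in C^2(\overline\Omega)$ vanishing on $\partial\Omega$; by monotone convergence this reduces to uniform $L^1(\delta\,dx)$ bounds on $f(v_\lambda)$ and $g(u_\lambda)$, where $\delta(x)=\mathrm{dist}(x,\partial\Omega)$. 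Testing against the torsion function $\xi$ (solving $-\Delta\xi=1$, $\xi|_{\partial\Omega}=0$), these follow from uniform $L^1$ bounds on $u_\lambda,v_\lambda$, which in turn come from the eigenfunction estimates above applied uniformly in $\lambda<\lambda_\sigma^*$. \emph{The main obstacle} I anticipate is precisely this last uniform $L^1(\delta\,dx)$ control of the nonlinearities: an adaptation of the scalar Brezis--Vazquez/Martel analysis to a coupled system in which the two component estimates must be closed simultaneously.
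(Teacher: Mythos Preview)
The paper does not prove this theorem: it is quoted as a result of Montenegro \cite{Mont} and is stated without proof. (The sentence immediately following the two quoted theorems, ``We give an alternate proof of a result which is slightly different than the above one\ldots'', refers to the \emph{second} Montenegro theorem on semi-stability, not to the statement you were asked about.) There is therefore no proof in this paper to compare your proposal against.

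For what it is worth, your outline is the standard route for cooperative elliptic systems and is essentially sound: monotone sub/supersolution iteration for parts (1)--(2), downward-closedness plus the first-eigenfunction/Jensen estimate for the ray structure and the finiteness and local boundedness of $\lambda_\sigma^*$, and monotonicity in $\lambda$ for the existence of the pointwise limits. One clarification on the step you flagged as the main obstacle: the eigenfunction \emph{identities} $\lambda_1 s=\lambda\int f(v)\varphi_1$ and $\lambda_1 t=\sigma\lambda\int g(u)\varphi_1$, combined with Jensen, yield $\lambda_1^2\ge \sigma\lambda^2\,\bigl(f(t)/t\bigr)\bigl(g(s)/s\bigr)$; superlinearity of $f,g$ then forces $s=\int u_\lambda\varphi_1$ and $t=\int v_\lambda\varphi_1$ to stay bounded as $\lambda\nearrow\lambda_\sigma^*$, and feeding this back into the same identities bounds $\int f(v_\lambda)\varphi_1$ and $\int g(u_\lambda)\varphi_1$. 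Since $\varphi_1$ is comparable to $\mathrm{dist}(\cdot,\partial\Omega)$, this is exactly the uniform $L^1(\delta\,dx)$ control of the nonlinearities needed to pass to the limit in the very-weak formulation by monotone convergence.
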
    

  Our notation will vary slightly from above.  Let $ (\lambda^*,\gamma^*) \in \Upsilon$ and set $ \sigma:= \frac{\gamma^*}{\lambda^*}$.  Define $ \Gamma_\sigma:=\{ (\lambda,\sigma \lambda):  \frac{\lambda^*}{2} < \lambda < \lambda^* \}$ and we let $ (u^*,v^*)$, called the \textbf{extremal solution} associated with $(P)_{\lambda^*,\gamma^*}$, be the pointwise limit of the minimal solutions along the ray $ \Gamma_\sigma$ as $ \lambda \nearrow \lambda^*$. As mentioned above $(u^*,v^*)$ is a weak solution of $(N)_{\lambda^*,\gamma^*}$ in a suitable sense.

 The following result shows that the minimal solutions are stable in some suitable sense and this will be crucial in obtaining regularity of the extremal solutions associated with $ (N)_{\lambda,\gamma}$.     
\begin{thm*} (Montenegro \cite{Mont})   Let $ (\lambda,\gamma) \in \mathcal{U} $ and let $ (u,v)$ denote the minimal solution of $(P)_{\lambda,\gamma}$.  Then $(u,v)$ is semi-stable in the sense that there is some smooth $ 0 < \zeta,\chi \in H_0^1(\Omega)$ and $  0 \le \eta $  such that 
\begin{equation} \label{sta}
 -\Delta \zeta = \lambda  f'(v) \chi + \eta \zeta, \qquad -\Delta \chi = \gamma g'(u) \zeta+ \eta \chi, \qquad  \mbox{ in } \Omega.
 \end{equation} 
\end{thm*}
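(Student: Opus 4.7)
The plan is to prove the theorem in two stages: (i) establish the existence of a principal eigenpair $(\eta,\zeta,\chi)$ for the linearized system via a Krein--Rutman argument for cooperative systems, and (ii) use the minimality of $(u,v)$ together with the convexity of $f,g$ to force $\eta \ge 0$.

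For (i), consider the matrix operator
$$M(\zeta,\chi) := \bigl(-\Delta\zeta-\lambda f'(v)\chi,\; -\Delta\chi-\gamma g'(u)\zeta\bigr)$$
with zero Dirichlet data. Since $f,g$ are increasing and $(u,v)$ is smooth and positive in $\Omega$, the coupling coefficients $\lambda f'(v)$ and $\gamma g'(u)$ are smooth and non-negative. For $k>0$ large, $M+kI$ admits a strongly positive compact inverse $T_k$ on $L^2(\Omega)^2$ via the maximum principle for cooperative systems together with elliptic regularity. The Krein--Rutman theorem then yields a largest eigenvalue $\rho>0$ of $T_k$ whose eigenpair $(\zeta,\chi)$ has both components strictly positive in $\Omega$ and smooth by bootstrap. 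Setting $\eta:=\rho^{-1}-k$ produces the relations \eqref{sta}.

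For (ii), suppose by contradiction that $\eta<0$, and write $d(x):=\mathrm{dist}(x,\partial\Omega)$. By Hopf's lemma and smoothness, $u,v,\zeta,\chi \ge c\,d$ and $\zeta,\chi \le C\,d$ in $\Omega$, so $(\tilde u,\tilde v):=(u-\epsilon\zeta,\,v-\epsilon\chi) \ge 0$ for $\epsilon>0$ small. A second-order Taylor expansion combined with the convexity of $f$ gives $f(v)-f(\tilde v) \le \epsilon f'(v)\chi+C_f\epsilon^2\chi^2$, and analogously for $g$. Substituting \eqref{sta} yields
$$-\Delta\tilde u-\lambda f(\tilde v)\ge -\epsilon\eta\zeta-C_f\lambda\epsilon^2\chi^2,\qquad -\Delta\tilde v-\gamma g(\tilde u)\ge -\epsilon\eta\chi-C_g\gamma\epsilon^2\zeta^2.$$
Because $-\epsilon\eta\zeta \ge \epsilon|\eta|c\,d$, while $\epsilon^2\chi^2 \le \epsilon^2 C'\,d$ (using $\chi^2\le C d^2 \le C\|d\|_\infty d$), the linear gains dominate the quadratic errors for $\epsilon$ small. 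Thus $(\tilde u,\tilde v)$ is a non-negative supersolution of $(P)_{\lambda,\gamma}$ strictly below $(u,v)$. A monotone iteration starting from $(\tilde u,\tilde v)$ converges to a smooth solution $\le(\tilde u,\tilde v)<(u,v)$, contradicting the minimality of $(u,v)$.

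The main obstacle is step (ii): the Krein--Rutman construction is standard, but ruling out $\eta<0$ requires delicate quantitative control of the eigenfunctions up to $\partial\Omega$. Specifically, the quadratic remainders $\epsilon^2\chi^2,\epsilon^2\zeta^2$ must be absorbed into the linear gains $\epsilon|\eta|\zeta,\epsilon|\eta|\chi$ \emph{uniformly} inside boundary layers where all four quantities vanish, and this balance rests precisely on matching the Hopf lower bounds with the $O(d)$ upper bounds on $\zeta,\chi$.
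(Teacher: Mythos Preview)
Your approach is correct and genuinely different from the paper's. The paper does not actually prove the statement with a constant $\eta \ge 0$; instead it differentiates the family $(P)_{\lambda,\sigma\lambda}$ in $\lambda$ along a fixed ray, setting $\tilde\zeta := \partial_\lambda u_\lambda$ and $\tilde\chi := \partial_\lambda v_\lambda$, which are positive by the monotonicity of minimal solutions and satisfy
\[
-\Delta\tilde\zeta = \lambda f'(v)\tilde\chi + f(v), \qquad -\Delta\tilde\chi = \gamma g'(u)\tilde\zeta + \sigma g(u).
\]
This yields only the differential inequalities $-\Delta\tilde\zeta \ge \lambda f'(v)\tilde\chi$ and $-\Delta\tilde\chi \ge \gamma g'(u)\tilde\zeta$, but that is all Lemma~\ref{stabb} actually consumes. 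Your Krein--Rutman construction followed by the sub/super\-solution contradiction is more work but delivers the sharper conclusion with an honest principal eigenvalue $\eta$, and it applies at any $(\lambda,\gamma)\in\mathcal{U}$ rather than only along a ray. One small correction in step~(ii): to obtain the displayed lower bound for $-\Delta\tilde u - \lambda f(\tilde v)$ you need $f(v)-f(\tilde v)\ge \epsilon f'(v)\chi - C_f\epsilon^2\chi^2$ (Lagrange remainder, using only that $f''$ is bounded on $[0,\|v\|_\infty]$), not the upper bound you wrote; with that fix your boundary-layer balance via Hopf goes through as stated.
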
    

We give an alternate proof of a result which is slightly different than the above one,  but which is sufficient for our purposes.   Fix $ (\lambda^*, \gamma^*) \in \Upsilon$, $ \sigma:= \frac{\gamma^*}{\lambda^*}$ and let $ (u_\lambda,v_\lambda)$ denote minimal solution of $(P)_{\lambda, \gamma}$ on the ray $ \Gamma_\sigma$.     Taking a derivative in $ \lambda$ of $(P)_{\lambda,\sigma \lambda}$ shows that 
\[ -\Delta \tilde{\zeta} =  \lambda f'(v_\lambda) \tilde{\chi} + f(v_\lambda), \quad -\Delta \tilde{\chi} =   \lambda \sigma g'(u_\lambda) \tilde{\zeta} +\sigma g(u_\lambda) \quad \mbox{ in $ \Omega$,}   \]  where $ \tilde{\zeta}:=\partial_\lambda u_\lambda$ and $ \tilde{\chi}:= \partial_\lambda v_\lambda$.  
 Using the monotonicity of $ u_\lambda, v_\lambda$ and the maximum principle shows that $ \tilde{\zeta}, \tilde{\chi} >0$.

We now recall some known results regarding the regularity of the extremal solutions associated with various systems.  In what follows $ \Omega$ is a bounded domain in $ \IR^N$. 
  
\begin{itemize}  

\item  In \cite{craig0} the following system 
   \begin{eqnarray*} 
(E)_{\lambda,\gamma}\qquad  \left\{ \begin{array}{lcl}
\hfill   -\Delta u    &=& \lambda e^v \qquad \Omega  \\
\hfill -\Delta v &=& \gamma e^u  \qquad \Omega,  \\
\hfill u &=& v =0 \quad  \pOm,
\end{array}\right.
  \end{eqnarray*}  was examined.     It was shown that if $ 3 \le N \le 9$ and 
  \[ \frac{N-2}{8} < \frac{\gamma^*}{\lambda^*} < \frac{8}{N-2},\]  then the extremal solution $(u^*,v^*)$ is bounded.     Note that  not only does the dimension $N$ play a role but  how close $ (\lambda^*,\gamma^*)$ are to the diagonal $ \gamma=\lambda$ plays a role. When $ \gamma=\lambda$ one can show that the above system reduces to the scalar equation $ -\Delta u = \lambda e^u$.   We remark that we were unable to extend the methods used in \cite{craig0} to handle $(N)_{\lambda,\gamma}$ except in the case where $ p=\theta$.  
  
  \item  In \cite{craig2} the system 
\[ (P')_{\lambda,\gamma} \qquad -\Delta u = \lambda F(u,v), \qquad -\Delta v = \gamma G(u,v) \qquad \mbox{in $ \Omega$,} \]  with $ u=v=0$ on $ \pOm$ was examined  examined in the cases where $ F(u,v)=f'(u) g(v), \;  G(u,v)= f(u)g'(v)$  (resp. $ F(u,v)=f(u) g'(v), \; G(u,v) = f'(u) g(v)$) and were denoted by $ (G)_{\lambda, \gamma}$  (resp. $(H)_{\lambda,\gamma}$).  It was shown that the extremal solutions associated with $(G)_{\lambda,\gamma}$ were bounded provided $ \Omega$ was a convex domain in $ R^N$ where $ N \le 3$ and $f$ and $g$ satisfied conditions similar to (R).  Regularity results regarding $(H)_{\lambda,\gamma}$ we also obtained in the case where at least one of of $ f$ and $g$ were explicit nonlinearities given by $ (u+1)^p$ or $ e^u$.

 \end{itemize}

\section{Main Results}

  We now state our main results.

\begin{thm} \label{main1} Suppose that $ 1 < p \le \theta$,    $(\lambda^*,\gamma^*) \in \Upsilon$ and let  $ (u^*,v^*)$ denote the extremal solution associated with $ (N)_{\lambda^*,\gamma^*}$.  Suppose that 
\[ \frac{N}{2} < 1 + \frac{2(\theta+1)}{p\theta -1} \left(  \sqrt{  \frac{p \theta (p+1)}{\theta +1}   } + \sqrt{  \frac{p \theta (p+1)}{\theta +1}    - \sqrt{ \frac{p \theta (p+1)}{\theta +1}     }} \right).\]   Then $ u^*,v^*$ are bounded. 

\end{thm}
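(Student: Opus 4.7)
The plan is to derive a Picone-type stability inequality for the coupled system, feed in carefully chosen powers of $u+1$ and $v+1$ as test functions, and combine the result with the Sobolev embedding and H\"older's inequality to obtain a uniform $L^q$ estimate along the minimal branch $\Gamma_\sigma$ strong enough for elliptic regularity to conclude $L^\infty$ bounds.

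The first step is the derivation of the coupled stability inequality. Working with the positive pair $(\tilde\zeta,\tilde\chi)$ from the alternate Montenegro-type statement recorded just before the Main Results section (so $-\Delta\tilde\zeta \ge \lambda p(v_\lambda+1)^{p-1}\tilde\chi$ and $-\Delta\tilde\chi \ge \gamma\theta(u_\lambda+1)^{\theta-1}\tilde\zeta$), I would test the first inequality against $\phi^2/\tilde\zeta$ and the second against $\psi^2/\tilde\chi$ for nonnegative $\phi,\psi \in H_0^1(\Omega)$, completing a square on each to get $\int \lambda p(v+1)^{p-1}(\tilde\chi/\tilde\zeta)\phi^2\,dx \le \|\nabla\phi\|_2^2$ and the analogue for $\psi$. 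Multiplying these and applying Cauchy--Schwarz in the integrand to eliminate the unknown ratio $\tilde\chi/\tilde\zeta$ yields the Picone-type stability inequality
\[
\lambda p\gamma\theta \left(\int_\Omega (v+1)^{(p-1)/2}(u+1)^{(\theta-1)/2}\phi\psi\,dx\right)^{\!2} \le \|\nabla\phi\|_2^2\,\|\nabla\psi\|_2^2.
\]

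The second step is to insert $\phi = (v+1)^s - 1$ and $\psi = (u+1)^t - 1$, where $s,t>1/2$ are parameters to be optimized. Using the equations $-\Delta v = \gamma(u+1)^\theta$ and $-\Delta u = \lambda(v+1)^p$ together with integration by parts,
\[
\|\nabla\phi\|_2^2 = \frac{s^2\gamma}{2s-1}\int_\Omega (u+1)^\theta\bigl[(v+1)^{2s-1}-1\bigr]dx,
\]
and symmetrically for $\|\nabla\psi\|_2^2$. Discarding the lower-order $-1$ contributions (absorbable by interpolation against the dominant terms) and cancelling $\lambda\gamma$ reduces the inequality to
\[
p\theta(2s-1)(2t-1)\,I_3^2 \le s^2 t^2\,J_1 J_2,
\]
where $I_3 := \int_\Omega (v+1)^{s+(p-1)/2}(u+1)^{t+(\theta-1)/2}dx$, $J_1 := \int_\Omega (v+1)^{2s-1}(u+1)^\theta\,dx$, and $J_2 := \int_\Omega (v+1)^p(u+1)^{2t-1}\,dx$.

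The third and decisive step is to convert this into an honest $L^q$ bound. Coupling the above with the Sobolev embedding applied to $\phi$ and $\psi$ (so that the $H^1_0$-norms of the test functions control the $L^{s\cdot 2^*}$- and $L^{t\cdot 2^*}$-norms of $v+1$ and $u+1$), and using H\"older to interpolate each of $J_1,J_2$ against $I_3$ and lower-order integrals already finite from the base regularity of the minimal solution, one produces a self-improving inequality that forces $\int_\Omega (v+1)^A(u+1)^B\,dx$ to be finite uniformly along $\Gamma_\sigma$ for certain exponents $(A,B)$. Maximizing $(A,B)$ subject to the admissibility constraints coming from the competition between $p\theta(2s-1)(2t-1)$ and $s^2 t^2$ is an explicit two-variable optimization whose solution is precisely the quantity $X := p\theta(p+1)/(\theta+1)$ together with the two-term radical $\sqrt{X}+\sqrt{X-\sqrt{X}}$ in the dimension threshold; carrying out this asymmetric optimization in $(s,t)$ and verifying that the optimizer is admissible is the main technical obstacle. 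Once the bound $\min\{A/p,\,B/\theta\}>N/2$ is achieved, classical elliptic $L^q$-to-$L^\infty$ regularity applied to $-\Delta u_\lambda = \lambda(v_\lambda+1)^p$ and $-\Delta v_\lambda = \gamma(u_\lambda+1)^\theta$ yields uniform $L^\infty$ estimates along $\Gamma_\sigma$ which pass to $(u^*,v^*)$ by monotone convergence.
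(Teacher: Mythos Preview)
Your stability inequality in Step~1 is correct and is equivalent (after optimizing over rescalings $\phi\mapsto c\phi$, $\psi\mapsto c^{-1}\psi$) to the paper's Lemma~\ref{stabb}. The genuine gap is in Step~3. After inserting $\phi=(v+1)^s-1$ and $\psi=(u+1)^t-1$ you arrive (modulo lower order) at
\[
p\theta(2s-1)(2t-1)\,I_3^{\,2}\ \le\ s^2t^2\,J_1J_2 .
\]
But the exponent pair in $I_3$ is exactly the arithmetic mean of those in $J_1$ and $J_2$, so Cauchy--Schwarz already gives $I_3^{\,2}\le J_1J_2$. Hence your inequality is either weaker than Cauchy--Schwarz (when $p\theta(2s-1)(2t-1)\le s^2t^2$) or merely sharpens the ratio $I_3^{\,2}/(J_1J_2)$; in neither case does it bound any of the three integrals. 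With two a~priori unrelated nonnegative functions $u,v$ there is no reverse H\"older inequality available, so ``interpolating $J_1,J_2$ against $I_3$'' cannot close the loop, and adding Sobolev on $\phi,\psi$ only produces further upper bounds on $\|v+1\|_{s\cdot 2^*}$, $\|u+1\|_{t\cdot 2^*}$ in terms of $J_1,J_2$ --- again the wrong direction. Your unperformed ``asymmetric optimization'' therefore cannot recover the stated threshold.

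The missing idea is a pointwise comparison between $u$ and $v$ (the paper's Lemma~\ref{pointwise}, of Souplet type):
\[
\lambda(\theta+1)(v+1+\alpha)^{p+1}\ \ge\ \gamma(p+1)(u+1)^{\theta+1}\qquad\text{in }\Omega,
\]
obtained by the maximum principle applied to $w=v+1+\alpha-C(u+1)^{(\theta+1)/(p+1)}$. The paper then uses a \emph{single} test function $\phi=(v+1)^t-1$ in (\ref{second}); the comparison converts the factor $(v+1)^{(p+1)/2}$ on the left into $\gtrsim(u+1)^{(\theta+1)/2}$, so that the stability integrand actually dominates the right-hand integrand $(u+1)^\theta(v+1)^{2t-1}$. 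The resulting coefficient is $\sqrt{p\theta(p+1)/(\theta+1)}-t^2/(2t-1)$, and the supremum of $t$ keeping this positive is precisely the $t_0$ in the dimension condition. In particular, the ratio $(p+1)/(\theta+1)$ in the threshold comes straight from the exponent in the pointwise comparison and has no counterpart in your scheme.
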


  There are two main steps in proving the above theorems.     We first show that minimal solutions of $(P)_{\lambda,\gamma}$, which are semi-stable in the sense of (\ref{sta}),  satisfy a stability inequality which is reminiscent of semi-stability in the sense of the second order scalar equations.    This is given by Lemma \ref{stabb}.   
  
The second  ingredient will be a  pointwise comparison result between $u$ and $v$, given in Lemma \ref{pointwise}. 
 We remark that this was motivated by  \cite{souplet_4} and a similar result was used  in \cite{craig1}.     

\begin{lemma} \label{stabb}  

 Let $(u,v)$ denote a semi-stable solution of $(P)_{\lambda,\gamma}$ in the sense of (\ref{sta}).  Then 
\begin{equation} \label{first}
2 \sqrt{\lambda \gamma} \int \sqrt{f'(v) g'(u)} \phi \psi \le \int | \nabla \phi|^2 + | \nabla \psi|^2,
\end{equation} for all $ \phi,\psi \in H_0^1(\Omega)$.  Taking $ \phi=\psi$ gives 
\begin{equation} \label{second}
 \sqrt{\lambda \gamma} \int \sqrt{f'(v) g'(u)} \phi^2 \le \int | \nabla \phi|^2 
\end{equation} for all $ \phi \in H_0^1(\Omega)$. 

\end{lemma}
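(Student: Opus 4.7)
\medskip

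\noindent\textbf{Proof proposal for Lemma \ref{stabb}.}

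The plan is to divide the two equations in (\ref{sta}) by $\zeta$ and $\chi$ respectively, apply a Picone-type inequality, then combine the results via AM--GM. Concretely, since $\zeta>0$ is smooth in $\Omega$ and solves $-\Delta\zeta = \lambda f'(v)\chi + \eta\zeta$, the standard Picone identity (testing against $\phi^2/\zeta$ and noting that $\nabla\phi\cdot\nabla\zeta\,(\phi/\zeta) \le \tfrac{1}{2}|\nabla\phi|^2 + \tfrac{1}{2}|\nabla\zeta|^2 \phi^2/\zeta^2$) gives
\[
\int_\Omega \lambda f'(v)\frac{\chi}{\zeta}\phi^2 + \int_\Omega \eta\,\phi^2 \;\le\; \int_\Omega |\nabla\phi|^2
\]
for every $\phi\in H_0^1(\Omega)$. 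Symmetrically, using the equation for $\chi$ and a test function $\psi\in H_0^1(\Omega)$,
\[
\int_\Omega \gamma g'(u)\frac{\zeta}{\chi}\psi^2 + \int_\Omega \eta\,\psi^2 \;\le\; \int_\Omega |\nabla\psi|^2 .
\]

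Adding the two displayed inequalities and discarding the nonnegative $\eta$-terms yields
\[
\int_\Omega \lambda f'(v)\frac{\chi}{\zeta}\phi^2 \;+\; \int_\Omega \gamma g'(u)\frac{\zeta}{\chi}\psi^2 \;\le\; \int_\Omega |\nabla\phi|^2 + |\nabla\psi|^2 .
\]
Now I would apply the pointwise AM--GM inequality $a+b\ge 2\sqrt{ab}$ to the integrand on the left, with $a=\lambda f'(v)(\chi/\zeta)\phi^2$ and $b=\gamma g'(u)(\zeta/\chi)\psi^2$; the ratio $\chi/\zeta$ cancels and we are left with
\[
2\sqrt{\lambda\gamma}\int_\Omega \sqrt{f'(v) g'(u)}\,|\phi\psi| \;\le\; \int_\Omega |\nabla\phi|^2 + |\nabla\psi|^2 ,
\]
which implies (\ref{first}) since $|\phi\psi|\ge \phi\psi$. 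Taking $\phi=\psi$ immediately gives (\ref{second}).

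The only technical point that requires care is the use of Picone's inequality when $\zeta$ vanishes on $\partial\Omega$; this is the potential main obstacle, but it is handled in the usual way by replacing $\zeta$ with $\zeta+\varepsilon$ (and similarly $\chi+\varepsilon$), performing the computation there, and letting $\varepsilon\downarrow 0$ using dominated convergence and the Hopf boundary behavior of $\zeta,\chi$. Once this approximation is in place, the rest of the argument is algebraic.
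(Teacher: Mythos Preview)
Your argument is correct and follows essentially the same route as the paper: both divide the equations in (\ref{sta}) by $\zeta$ and $\chi$, invoke the Picone-type inequality $\int \frac{-\Delta E}{E}\phi^2 \le \int |\nabla\phi|^2$, add the two resulting bounds, and then apply AM--GM pointwise so that the ratio $\chi/\zeta$ cancels. The only cosmetic difference is that the paper sidesteps the boundary issue by taking $\phi,\psi\in C_c^\infty(\Omega)$ (and extending by density) rather than using your $\zeta+\varepsilon$ approximation.
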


\begin{proof} Since $(u,v)$ is a semi-stable solution of $(P)_{\lambda,\gamma}$ there is some $ 0 < \zeta, \chi \in H_0^1(\Omega)$ smooth such that 
\[ \frac{-\Delta \zeta}{\zeta} \ge \lambda f'(v) \frac{\chi}{\zeta}, \qquad \frac{-\Delta \chi}{\chi} \ge \gamma g'(u) \frac{\zeta}{\chi}, \qquad \mbox{ in $ \Omega$.}\]  Let $ \phi,\psi \in C_c^\infty(\Omega)$ and multiply the first equation by $ \phi^2$ and the second by $\psi^2$  and integrate over $ \Omega$ to arrive at 
\[ \int \lambda f'(v) \frac{\chi}{\zeta} \phi^2 \le \int | \nabla \phi|^2, \qquad \int \gamma g'(u) \frac{\zeta}{\chi} \psi^2 \le \int | \nabla \psi|^2,\]  where we have utilized the result that  for any sufficiently smooth $ E>0$ we have 
\[ \int \frac{-\Delta E}{E} \phi^2 \le \int | \nabla \phi|^2,\] for all $ \phi \in C_c^\infty(\Omega)$.   We now add the inequalities to obtain 
\begin{equation} \label{thing} 
 \int (\lambda f'(v)  \phi^2)   \frac{\chi}{\zeta}  + (\gamma g'(u)  \psi^2 )\frac{\zeta}{\chi} \le \int | \nabla \phi|^2 + | \nabla \psi|^2.
\end{equation}   Now note that 
\[ 2 \sqrt{ \lambda \gamma f'(v) g'(u)} \phi \psi \le 2t \lambda f'(v) \phi^2 + \frac{1}{2t} \gamma g'(u) \psi^2,\] for any $ t>0$.  Taking $ 2t = \frac{\chi(x)}{\zeta(x)}$ gives 
\[ 2 \sqrt{ \lambda \gamma f'(v) g(u)} \phi \psi \le (\lambda f'(v)  \phi^2)   \frac{\chi}{\zeta}  + (\gamma g'(u)  \psi^2 )\frac{\zeta}{\chi},\]  and putting this back into (\ref{thing}) gives the desired result.

\end{proof}

\begin{lemma} \label{pointwise} Let $ (u,v)$ denote a smooth solution of $(N)_{\lambda,\gamma}$ and suppose  that $ \theta \ge p >1$.   Define 
\[ \alpha:= \max \left\{ 0, \left(  \frac{\gamma (p+1)}{\lambda (\theta+1) } \right)^\frac{1}{p+1}-1 \right\} .\] Then 
\begin{equation} \label{point}  
\lambda (\theta +1) (v+1 +\alpha)^{p+1} \ge \gamma (p+1) (u+1)^{ \theta +1} \quad \mbox{ in $ \Omega$}.
\end{equation}   

\end{lemma}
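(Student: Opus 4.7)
The inequality to prove is equivalent, after taking $(p+1)$-th roots, to
$$v + 1 + \alpha \ge C (u+1)^r \quad \text{in } \Omega,$$
where $C := \bigl(\gamma(p+1)/[\lambda(\theta+1)]\bigr)^{1/(p+1)}$ and $r := (\theta+1)/(p+1)$. Note that the hypothesis $\theta \ge p$ gives the crucial fact that $r \ge 1$, and the choice of $\alpha = \max\{0, C-1\}$ is designed precisely so that $1 + \alpha - C \ge 0$, with equality when $\alpha > 0$. So my plan is to set $\phi(x) := v(x) + 1 + \alpha - C(u(x)+1)^r$ and prove $\phi \ge 0$ in $\Omega$ by a straight maximum-principle argument.

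On $\pOm$, since $u = v = 0$, we have $\phi = 1 + \alpha - C \ge 0$ by construction. Suppose, toward a contradiction, that $\phi < 0$ at some point of $\Omega$. Since $\phi$ is continuous on $\overline{\Omega}$, its infimum is attained at some interior point $x_0$ at which $\Delta \phi(x_0) \ge 0$, so $-\Delta \phi(x_0) \le 0$. A direct computation using the system $(N)_{\lambda,\gamma}$ gives
$$-\Delta \phi = \gamma (u+1)^\theta - Cr\lambda (u+1)^{r-1}(v+1)^p + Cr(r-1)(u+1)^{r-2}|\nabla u|^2.$$
The last term is non-negative because $r \ge 1$, so evaluating at $x_0$ yields
$$\gamma(u+1)^\theta \le Cr\lambda (u+1)^{r-1}(v+1)^p \quad \text{at } x_0.$$

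The rest is algebraic. Using $r(p+1) = \theta + 1$ one checks $\theta - r + 1 = pr$, and using $C^{p+1} = \gamma(p+1)/[\lambda(\theta+1)]$ one checks $\gamma/(Cr\lambda) = C^p$. Thus the previous inequality rearranges to
$$(v+1)^p \ge C^p (u+1)^{pr} \quad \text{at } x_0,$$
which, after taking a $p$-th root of these positive quantities, gives $v(x_0)+1 \ge C(u(x_0)+1)^r$. Therefore $\phi(x_0) \ge \alpha \ge 0$, contradicting $\phi(x_0) < 0$. Hence $\phi \ge 0$ throughout $\Omega$, which is (\ref{point}).

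The only place where anything subtle happens is the sign of the $|\nabla u|^2$ term in $-\Delta \phi$: it is non-negative exactly because $r-1 \ge 0$, i.e.\ because $\theta \ge p$. This is the one spot where the hypothesis is essential; without it one would need to control $|\nabla u(x_0)|^2$ by additional information (for instance, by using $\nabla \phi(x_0) = 0$ to express $|\nabla v|^2$ in terms of $|\nabla u|^2$), and the argument would become the more delicate estimate already familiar from analogous scalar comparison lemmas. Everything else reduces to the algebraic identities forced by the choice of $r$ and $C$.
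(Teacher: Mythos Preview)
Your proof is correct and is essentially the same as the paper's: both define the same auxiliary function $w = v+1+\alpha - C(u+1)^{(\theta+1)/(p+1)}$, compute $-\Delta w$, use $\theta\ge p$ to drop the $|\nabla u|^2$ term, and reach a contradiction via the maximum principle. The only cosmetic difference is that the paper works on the negativity set $\Omega_0=\{w<0\}$ and shows $-\Delta w\ge 0$ there with $w=0$ on $\partial\Omega_0$, whereas you evaluate at a single interior minimum point; the algebra and the use of the hypothesis are identical.
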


\begin{proof}  Let $(u,v)$ denote a smooth solution of $(N)_{\lambda,\gamma}$ and define $ w:= v+1+\alpha - C(u+1)^t$ where 
\[ C:= \left( \frac{\gamma (p+1)}{\lambda ( \theta +1)} \right)^\frac{1}{p+1} \mbox{ and }  \quad t:= \frac{\theta +1}{p+1} \ge 1.\]   

 Note that $ w \ge 0$ on $ \pOm$ and define $ \Omega_0:= \{ x \in \Omega: w(x) < 0 \}$.  If $ \Omega_0$ is empty then we are done so we suppose that $ \Omega_0$ is nonempty.  Note that since $ w \ge 0$ on $ \pOm$ we have $ w = 0$ on $ \pOm_0$.   A computation shows that 
\[- \Delta w = \gamma (u+1)^\theta - C t (u+1)^{t-1} \lambda (v+1)^p + C t (t-1) (u+1)^{t-2} | \nabla u|^2, \qquad  \mbox{ in $ \Omega$,} \]  and since $ t \ge 1$ we have

\[ -\Delta w  \ge   \gamma (u+1)^\theta - C t (u+1)^{t-1} \lambda (v+1)^p  \quad \mbox{ in $ \Omega$.} \] 

  Note that we have, by definition,   
\[ v+1 \le v+1+\alpha < C(u+1)^t \qquad \mbox{ in $ \Omega_0$,} \]  and so we have 
\[ -\Delta w  \ge \gamma (u+1)^\theta - C^{p+1} t \lambda (u+1)^{tp+t-1} \quad \mbox{ in $ \Omega_0$},\]   but the right hand side of this is zero and hence we have $ -\Delta w \ge 0$ in $ \Omega_0$ with $ w = 0 $ on $ \pOm_0$ and hence $ w \ge  0$ in $ \Omega_0$, which is a contradiction.  So $ \Omega_0$ is empty. 
  
\end{proof}

\textbf{Proof of Theorem \ref{main1}.}    Let $ (\lambda^*,\gamma^*) \in \Upsilon$ and let $ \sigma:=\frac{\gamma^*}{\lambda^*}$ and suppose that $ (u,v)$ denotes a minimal solution of $(N)_{\lambda,\gamma}$ on the ray $ \Gamma_\sigma$.   Put $ \phi:= (v+1)^t-1$, where  $\frac{1}{2} < t$, into (\ref{second})   to obtain 
\[ \sqrt{\lambda \gamma p \theta} \int (v+1)^\frac{p-1}{2} (u+1)^\frac{\theta-1}{2} ( (v+1)^t-1)^2 \le t^2 \int (v+1)^{2t-2} | \nabla v|^2,\] and multiply $(N)_{\lambda,\gamma}$ by $ (v+1)^{2t-1}-1$ and integrate by parts to obtain 
\[ t^2 \int (v+1)^{2t-2} | \nabla v|^2 = \frac{t^2 \gamma}{2t-1} \int (u+1)^\theta ( (v+1)^{2t-1}-1).\]   Equating these and expanding the squares and dropping some positive terms gives 
\begin{eqnarray} \label{poo-poo} 
&& \sqrt{ \lambda \gamma p \theta} \int (v+1)^\frac{p-1}{2} (u+1)^\frac{\theta-1}{2} (v+1)^{2t}  \nonumber \\
 &   &  \qquad  \le    \frac{t^2 \gamma}{2t-1} \int (u+1)^\theta (v+1)^{2t-1} \nonumber \\
 && \qquad \quad   + 2 \sqrt{ \lambda \gamma p \theta} \int (v+1)^\frac{p-1}{2} (u+1)^\frac{\theta-1}{2} (v+1)^t.  
\end{eqnarray}
 We now use Lemma \ref{pointwise} to get a lower bound for  
\[ I:= \int (v+1)^\frac{p-1}{2} (u+1)^\frac{\theta-1}{2} (v+1)^{2t},\]  but we need to rework the pointwise estimate (\ref{point}) first.  From (\ref{point}) we have 
\[ \sqrt{  \frac{\gamma (p+1)}{\lambda (\theta+1)}} (u+1)^\frac{\theta+1}{2} \le (v+1 + \alpha )^\frac{p+1}{2},\]  and for all $ \delta >0$ there is some $ C(\delta) >0$ such that 
\[ (v+1 + \alpha)^\frac{p+1}{2} \le (1+\delta) (v+1)^\frac{p+1}{2} + C(\delta ) \alpha^\frac{p+1}{2}.\]   From this we see that there is some $ C_1= C_1(\delta,p,\alpha) $ such that 
\[ (v+1)^\frac{p+1}{2} \ge  \sqrt{  \frac{\gamma (p+1)}{\lambda (\theta+1)}} \frac{ (u+1)^\frac{\theta+1}{2} }{1+\delta} - C_1.\]   We now rewrite $I$ as 
\[ I= \int (u+1)^\frac{\theta-1}{2} (v+1)^{2t-1} (v+1)^\frac{p+1}{2},\] and use the above estimate to show that 
\begin{equation} \label{I}
I \ge \sqrt{ \frac{ \gamma (p+1)}{\lambda (\theta+1)}} \frac{1}{\delta +1} \int (u+1)^\theta (v+1)^{2t-1} - C_1 \int (u+1)^\frac{\theta-1}{2} (v+1)^{2t-1}.
\end{equation}  We now return to (\ref{poo-poo})  and write the left hand side, where $ \E>0$ is small,   as 
\[ \E \sqrt{ \lambda \gamma p \theta} I + (1-\E) \sqrt{ \lambda \gamma p \theta} I,\] and we leave the first term alone and we use the above lower estimate for $I$ on the second term.   Putting this back into (\ref{poo-poo}) and after some rearranging one arrives at 

\begin{eqnarray} \label{pppp} 
\E \sqrt{ \lambda \gamma p \theta} I  &+ &  \gamma K \int (u+1)^\theta (v+1)^{2t-1} \nonumber \\ 
& \le & 2 \sqrt{ \lambda \gamma p \theta} \; I_1 + (1-\E) \sqrt{ \lambda \gamma p \theta} C_1  \; I_2
\end{eqnarray} where 
\[ K:= \frac{(1-\E)}{1+\delta} \sqrt{  \frac{p \theta (p+1)}{\theta +1}} - \frac{t^2}{2t-1},\]
\[ I_1:= \int (v+1)^\frac{p-1}{2} (u+1)^\frac{\theta-1}{2} (v+1)^t,  \qquad \mbox{ and } \] 
\[ I_2:= \int (u+1)^\frac{\theta-1}{2} (v+1)^{2t-1}.\]  
For the moment we assume the following \textbf{claims:}   for all $ T>1$ and $ k >1$ 
\begin{eqnarray} \label{claim1} 
I_2 & \le &  \frac{1}{T^\frac{\theta+1}{2}} \int (u+1)^\theta (v+1)^{2t-1}  + | \Omega | T^\frac{\theta-1}{2} k^{2t-1} \nonumber \\
&& + \frac{1}{k^\frac{p+1}{2}} \int (u+1)^\frac{\theta-1}{2} (v+1)^{ \frac{p-1}{2} +2t},
\end{eqnarray}  and 
\begin{eqnarray} \label{claim2} 
I_1 & \le & \frac{1}{T^t} \int (u+1)^\frac{\theta-1}{2} (v+1)^{ \frac{p-1}{2} +2t} + | \Omega| T^{ \frac{p-1}{2} +t} k^\frac{\theta-1}{2} \nonumber \\
&& + \frac{ T^{ |   \frac{p+1}{2}-t |}}{k^\frac{\theta+1}{2}} \int (u+1)^\theta (v+1)^{2t-1}.
\end{eqnarray}   Putting (\ref{claim1}) and (\ref{claim2}) back into (\ref{pppp}) one arrives at an estimate of the form 

\begin{equation} \label{final} 
K_1 I + K_2 \int (u+1)^\theta (v+1)^{2t-1} \le C(\E,p,\theta, T, k,\delta)
\end{equation} where 
\[ K_1:= \E \sqrt{ \lambda \gamma p \theta} - \frac{ 2 \sqrt{ \lambda \gamma p \theta}}{T^t} - \frac{(1-\E) \sqrt{\lambda \gamma p \theta}\;  C_1}{k^\frac{p+1}{2}}, \quad \mbox{ and } \] 
 
\[ K_2:= \gamma K - \frac{   2 \sqrt{ \lambda \gamma p \theta} \; T^{ | \frac{p+1}{2} -t |}}{k^\frac{\theta+1}{2}} - \frac{(1-\E) \sqrt{ \lambda \gamma p \theta} \; C_1}{T^\frac{\theta+1}{2}},    \]  and where $ C(\E,p,\theta, T, k,\delta)$ is a positive finite constant which is uniform on the ray $ \Gamma_\sigma$.    Define 
\[ t_0:= \sqrt{ \frac{ p \theta (p+1)}{\theta+1}} + \sqrt{   \frac{ p \theta (p+1)}{\theta+1} -  \sqrt{ \frac{ p \theta (p+1)}{\theta+1}}}   \] and note that $ t_0>1$ for all $ p,\theta >1$.  Fix $ 1 <t< t_0$ and hence 
\[  \sqrt{ \frac{ p \theta (p+1)}{\theta+1}} - \frac{t^2}{2t-1} >0.\] We now fix $ \E>0$ and $ \delta >0$ sufficiently small such that $ K >0$.  
 We now fix $ T>1$ sufficiently large such that  
\[ \E \sqrt{ \lambda \gamma p \theta} - \frac{ 2 \sqrt{ \lambda \gamma p \theta}}{T^t}  \] (this is the first two terms from $K_1$)  and 
\[ \gamma K - \frac{(1-\E) \sqrt{ \lambda \gamma p \theta} \; C_1}{T^\frac{\theta+1}{2}} \]
(the first and third terms from $K_2$) are positive and  bounded away from zero on the the ray $ \Gamma_\sigma$.   
We now take $ k>1$ sufficiently big such that $ K_1,K_2 $ are positive and  bounded away from zero on the ray $ \Gamma_\sigma$ and hence we have estimates of the form: for all $ 1  < t<t_0$ there is some $ C_t >0$ such that 
\begin{equation} \label{ll}
 \int (u+1)^\theta (v+1)^{2t-1} \le C_t,
\end{equation} 
where $ C_t $ is some finite uniform constant  on the ray $ \Gamma_\sigma$.  Using the pointwise lower estimate (\ref{point}) for $ v+1$ gives: for all $ 1 <t<t_0$ there is some  $ \tilde{C}_t <\infty$, uniform along the ray $ \Gamma_\sigma$,  such that 

\begin{equation} \label{zzzz}
\int (u+1)^{ \theta + ( \frac{ (\theta+1) (2t-1)}{p+1}} \le \tilde{C}_t, 
\end{equation}  and hence this estimate also holds if one replaces $u$ with $ u^*$.    We now let $ 1 <t<t_0$ and note that 
\[ \frac{1}{\gamma} \int | \nabla v|^2 = \int (u+1)^\theta v  \le  \int (u+1)^\theta (v+1) \le\int (u+1)^\theta (v+1)^{2t-1} \le C_t,\] by (\ref{ll}) and hence we can pass to the limit and see that $ v^* \in H_0^1(\Omega)$.   We now proceed to show that $ v^*$ is bounded in low dimensions.    First note that 
\[ \frac{-\Delta v^*}{\gamma^*} = (u^*+1)^\theta = \frac{ (u^*+1)^\theta}{v^*+1} v^* + \frac{ (u^*+1)^\theta}{v^*+1} \quad \mbox{in $ \Omega$.}\]    To show that $ v^*$ is bounded it is sufficient, since $ v^* \in H_0^1(\Omega)$, to show that  $ \frac{(u^*+1)^\theta}{v^*+1} \in L^T(\Omega)$ for some $T> \frac{N}{2}$.   Using (\ref{point}) and passing to the limit one sees there is some $ C>0$ such that 
\[ \frac{ (u^*+1)^\theta}{v^*+1} \le C (u^*+1)^\frac{p \theta-1}{p+1} +C \qquad \mbox{ in $ \Omega$,} \]  and so $ \frac{ (u^*+1)^\theta}{v^*+1} \in L^T(\Omega)$ for some $T>\frac{N}{2}$ provided 
\[ \frac{(p \theta -1)}{p+1} \frac{N}{2} < \theta + \frac{( \theta +1) (2 t_0-1)}{p+1},\] after considering (\ref{zzzz}).  This rearranges into 
\[ \frac{N}{2} < 1 + 2 \frac{(\theta+1)}{p+1} t_0,\] which is the desired result.  We now use $ (N)_{\lambda,\gamma}$ and elliptic regularity to see that $ u^*$ is also bounded.

\hfill $ \Box$

\textbf{Proof of Claims (\ref{claim1}) and (\ref{claim2}).}  We first prove (\ref{claim1}). We write $I_2$ as 
\[ I_2= \int_{u+1 \ge T} + \int_{ u+1 <T, v+1 \le k} + \int_{u+1<T, v+1 >k},\] where the integrands are the same as in $I_2$.  Note that the first integral is less than or equal 
\[ \int_{u+1 >T} (u+1)^\frac{\theta-1}{2} \left( \frac{u+1}{T} \right)^\frac{\theta+1}{2} (v+1)^{2t-1} \le \frac{1}{T^\frac{\theta+1}{2}} \int (u+1)^\theta (v+1)^{2t-1}.\]  The second integral is trivial to get upper estimate on.  One estimates the third integral in the same way as the first to see that 
\[ \int_{u+1 <T, v+1 >k} \le \frac{1}{k^\frac{p+1}{2}} \int (u+1)^\frac{\theta-1}{2} (v+1)^{ \frac{p-1}{2} +2t}.\]   Combining these estimates gives (\ref{claim1}).  We now prove (\ref{claim2}).  We write 
\[ I_1= \int_{v+1 \ge T} + \int_{v+1 <T, u+1 <k} + \int_{v+1<T,u+1 \ge k},\] where the integrands are the same as $I_1$.  Note that the first integral is less than or equal 
\[ \int_{v+1>T} (v+1)^{ \frac{p-1}{2} +t} \left( \frac{v+1}{T} \right)^t (u+1)^\frac{\theta-1}{2},\]  and this is less than or equal 
\[ \frac{1}{T^t} \int (v+1)^{ \frac{p-1}{2} + 2t} (u+1)^\frac{\theta-1}{2}.\]  The second integral is easily estimated.   We rewrite the third integral as  
 \[ \int_{v+1<T, u+1 \ge k} (v+1)^{2t-1} (u+1)^\theta \left( (v+1)^{ \frac{p+1}{2}-t} (u+1)^\frac{ -\theta-1}{2} \right),\]  and we now estimate the terms inside the bracket in the obvious manner.  Combining these gives (\ref{claim2}).

\hfill $ \Box$

\end{document}